\documentclass[12pt,a4paper]{article}
\title{Convergence of eigenvalues for a highly non-self-adjoint differential operator}
\author{E. B. Davies
				\footnote{Department of Mathematics,
				King's College London,
				Strand,
				London WC2R 2LS,
				e.brian.davies@kcl.ac.uk}
				\and
				John Weir
				\footnote{Department of Mathematics,
				King's College London,
				Strand,
				London WC2R 2LS,
				john.l.weir@kcl.ac.uk}}
\newtheorem{theorem}{Theorem}[section]
\newtheorem{lemma}[theorem]{Lemma}

\newtheorem{corollary}[theorem]{Corollary}

\newenvironment{proof}[1][Proof]{\begin{trivlist}
\item[\hskip \labelsep {\bfseries #1}]}{\end{trivlist}}
\newenvironment{definition}[1][Definition]{\begin{trivlist}
\item[\hskip \labelsep {\bfseries #1}]}{\end{trivlist}}

\newenvironment{remark}[1][Remark]{\begin{trivlist}
\item[\hskip \labelsep {\bfseries #1}]}{\end{trivlist}}

\newcommand{\qed}{\nobreak \ifvmode \relax \else
      \ifdim\lastskip<1.5em \hskip-\lastskip
      \hskip1.5em plus0em minus0.5em \fi \nobreak
      \vrule height0.75em width0.5em depth0.25em\fi}

\newcommand{\setN}{\mathbf{N}}
\newcommand{\setR}{\mathbf{R}}
\newcommand{\setZ}{\mathbf{Z}}
\newcommand{\setC}{\mathbf{C}}
\newcommand{\eqnref}[1]{(\ref{#1})}
\newcommand{\norm}[1]{\left\| #1\right\|}
\newcommand{\mod}[1]{\left| #1 \right|}
\newcommand{\dom}[1]{\mathrm{Dom}\left(#1\right)}

\newcommand{\conj}[1]{\overline{#1}}
\newcommand{\e}{\mathrm{e}}
\newcommand{\ip}[2]{\left< #1, #2 \right>}

\newcommand{\dd}[1]{\frac{\mathrm{d}}{\mathrm{d} #1}}
\newcommand{\pdd}[1]{\frac{\partial}{\partial #1}}
\newcommand{\spec}[1]{\mathrm{Spec}\left(#1\right)}
\newcommand{\lin}{\mathrm{lin}}

\newcommand{\smooth}{\mathcal{C}^{\infty}([0,1])}

\begin{document}
\maketitle

\begin{abstract}
In this paper we study a family of operators dependent on a small parameter $\varepsilon>0$, which arise in a problem in fluid mechanics. We show that the spectra of these operators converge to $\setN$ as $\varepsilon \to 0$, even though, for fixed $\varepsilon > 0$, the eigenvalue asymptotics are quadratic.

\end{abstract}

\section{Introduction}

In a recent paper \cite{bobs} Benilov, O'Brien and Sazonov argued that the equation
\begin{equation}
	\frac{\partial f}{\partial t} = Hf
	\end{equation}
approximates the evolution of a liquid film inside a rotating horizontal cylinder, where
$H$ is the closure of the operator $H_0$ on $L^2(-\pi,\pi)$ defined by
\begin{equation}\label{eq:h0}
	(H_0f)(\theta) = \varepsilon\pdd{\theta}\left(\sin(\theta)\frac{\partial f}{\partial \theta}\right) + \frac{\partial f}{\partial \theta}
	\end{equation}
for any sufficiently small fixed $\varepsilon > 0$ and all $f \in \dom{H_0} = \mathcal{C}^2_{\mathrm{per}}([-\pi,\pi])$.
They also made several conjectures, based on non-rigorous asymptotic and numerical analysis, including that the spectrum of $H$ is purely imaginary and consists of eigenvalues which accumulate at $\pm i \infty$, and that the eigenvalues converge to $i\setZ$ as $\varepsilon \to 0$. Weir proved these conjectures, except for the convergence of the eigenvalues, in \cite{weir-2007,weir-2008}. We prove the remaining conjecture in this paper.

Davies showed in \cite{davies-2007} that, for $0<\varepsilon \geq 2$, every $\lambda \in \setC$ is an eigenvalue of $H$. For $\varepsilon < 2$ he showed that $-iH$ has compact resolvent by considering the unitarily equivalent operator $A$ on $l^2(\setZ)$ defined by
\begin{equation}
	(Av)_n = \frac{\varepsilon}{2}n(n-1)v_{n-1} - \frac{\varepsilon}{2}n(n+1)v_{n+1}+nv_n
	\end{equation}
for all $v \in \dom{A} = \{v \in l^2(\setZ) : Av \in l^2(\setZ) \}$. Here $A = \mathcal{F}^{-1}(-iH)\mathcal{F}$, where $\mathcal{F}:L^2(-\pi,\pi) \to l^2(\setZ)$ is the Fourier transform. If $\mathcal{F}f=v$ then $(v_n)_{n \in \setZ}$ are the Fourier coefficients of $f$. This result was achieved by obtaining sharp bounds on the rate of decay of eigenvectors and resolvent kernels, and by determining the precise domains of the operators involved.
 He also showed that
\begin{equation}\label{eq:a}
A = A_- \oplus 0 \oplus A_+,
\end{equation}
where $A_-$ and $A_+$ are the restrictions of $A$ to $l^2(\setZ_-)$ and $l^2(\setZ_+)$ respectively, and that $A_-$ is unitarily equivalent to $-A_+$. Since the resolvent is compact and the adjoint has the same eigenvalues, the spectrum of $-iH$ consists entirely of eigenvalues.

Weir proved in \cite{weir-2007} that these eigenvalues, if they exist, must all be real. Boulton, Levitin and Marletta subsequently proved in a recent paper \cite{boulton-2008} that a wider class of operators possess only real eigenvalues. However, they did not prove that any non-zero eigenvalues exist for these operators, nor that their spectra are real. In \cite{weir-2008}, Weir proved rigorously that $-iH$ has infinitely many eigenvalues, all of multiplicity one, which accumulate at $\pm \infty$ by showing that the eigenvalues of $A_+$ correspond to those of a self-adjoint operator with compact resolvent. This operator $L_{\varepsilon}$ on $L^2((0,1),2w_{\varepsilon}(x) \mathrm{d}x)$ is defined as the closure of the operator given by
\begin{equation}\label{eq:leps}
(L_{\varepsilon}f)(x) = - \frac{\varepsilon}{2} w_{\varepsilon}(x)^{-1}(p_{\varepsilon}f')'(x)
\end{equation}
for all $f \in \{f \in \smooth : f(0) = 0\}$,
where
\begin{eqnarray*}
	w_{\varepsilon}(x)	& = & x^{-1} (1-x)^{1/\varepsilon} (1+x)^{-1/\varepsilon},\\
	p_{\varepsilon}(x)	& = & (1-x)^{1+1/\varepsilon} (1+x)^{1-1/\varepsilon}.
\end{eqnarray*}
It was argued in \cite{chugunova-2007} that the distribution of the eigenvalues, if they exist, should be quadratic, but no rigorous bounds were given. By analysing the self-adjoint operator, Weir proved rigorously in \cite{weir-2008} that $\lambda_n \sim \varepsilon \pi^2 n^2 \beta^{-2}$ as $n \to \infty$ for a certain explicit constant $\beta$.

The conjecture in \cite{bobs} that $\lambda_{\varepsilon,n} \to n$ as $\varepsilon \to 0$ is supported by numerical evidence in the same paper, and also in \cite{chugunova-2007, davies-2007, weir-2008}. In this paper we prove the conjecture rigorously. In Section \ref{sec:unitary} we apply unitary transformations to obtain a family of operators on the same space. These operators are invertible with Hilbert-Schmidt inverses (Theorem \ref{thm:unitary}). We then identify a differential
 operator with spectrum $\setN$ (Theorem \ref{thm:cons}, Corollary \ref{cor:sa}), which is unitarily equivalent to an operator whose inverse has an integral kernel which is the pointwise limit of the integral kernels of the aforementioned family of inverses (Theorem \ref{thm:l0m0}). Finally we show that we actually have norm convergence (Theorem \ref{thm:hsc}) and use the variational method to show that this implies convergence of the eigenvalues, i.e. $\lambda_{\varepsilon,n} \to n$ as $\varepsilon \to 0$ for all $n \in \setN$; see Theorem \ref{thm:evc}.

\section{Unitary transformations}\label{sec:unitary}

Since we are interested in the limit as $\varepsilon \to 0$, we assume for the rest of the paper that $0 < \varepsilon < 1$. In order to obtain convergence of operators in some sense, we need a family of operators on the same space. To this end we apply unitary transformations to $L_\varepsilon$ to obtain a family of operators on $L^2((0,1/\varepsilon),\mathrm{d}s)$ and then extend each operator to an operator on $L^2((0,\infty),\mathrm{d}s)$.

\begin{lemma}\label{lem:lepstrans}
The operator $L_{\varepsilon}$, defined by \eqnref{eq:leps}, is unitarily equivalent to an operator $\tilde{L_{\varepsilon}}$ on $L^2((0,1/\varepsilon), \tilde{w}_{\varepsilon}(s)\mathrm{d}s)$ such that
\begin{equation}\label{eq:lepst}
	(\tilde{L_{\varepsilon}}g)(s) = -\tilde{w_{\varepsilon}}(s)^{-1}(\tilde{p}_{\varepsilon}g')'(s)
\end{equation}
where
\begin{eqnarray*}
	\tilde{w_{\varepsilon}}(s)	& = & 2s^{-1} (1-\varepsilon s)^{1/\varepsilon} (1+\varepsilon s)^{-1/\varepsilon}\\
	\tilde{p}_{\varepsilon}(s)	& = & (1-\varepsilon s)^{1+1/\varepsilon} (1+\varepsilon s)^{1-1/\varepsilon}
\end{eqnarray*}
for all $s \in (0,1/\varepsilon)$. Moreover, $\tilde{L}_{\varepsilon}$ is invertible with inverse $R_{\varepsilon}$ given by
\begin{equation}
	(R_{\varepsilon}f)(s) = \int_0^{1/\varepsilon} G_{\varepsilon}(s,t)f(t)\tilde{w_{\varepsilon}}(t) \mathrm{d}t
\end{equation}
for all $f \in L^2((0,1/\varepsilon),\tilde{w_{\varepsilon}}(s)\mathrm{d}s)$, where
\begin{eqnarray}
	\gamma_{\varepsilon}(s) & = & \int_0^s \tilde{p}_{\varepsilon}(u)^{-1} \mathrm{d}u\\
													& = & \frac{1}{2} \left\{\left(\frac{1+\varepsilon s}{1-\varepsilon s}\right)^{1/\varepsilon} - 1  \right\}\label{eq:gameps}
\end{eqnarray}
and
\begin{equation}
	G_{\varepsilon}(s,t) = \left\{ \begin{array}{cc}	 \gamma_{\varepsilon}(s)	&	\textrm{if } 0 < s \leq t < 1/\varepsilon\\
																										 \gamma_{\varepsilon}(t)	&	\textrm{if } 0 < t \leq s < 1/\varepsilon.
									\end{array}\right.
\end{equation}
\end{lemma}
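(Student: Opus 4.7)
The natural plan is to push everything through the change of variable $s=x/\varepsilon$, which carries $(0,1)$ to $(0,1/\varepsilon)$ and handles the $\varepsilon$ in front of the derivatives in $L_\varepsilon$ by absorbing it into the independent variable.

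First I would define $U:L^2((0,1),2w_\varepsilon(x)\mathrm{d}x)\to L^2((0,1/\varepsilon),\tilde w_\varepsilon(s)\mathrm{d}s)$ by $(Uf)(s)=f(\varepsilon s)$ and check unitarity. By the substitution $x=\varepsilon s$,
\[
\int_0^1 |f(x)|^2\, 2w_\varepsilon(x)\,\mathrm{d}x
=\int_0^{1/\varepsilon}|f(\varepsilon s)|^2\,2\varepsilon w_\varepsilon(\varepsilon s)\,\mathrm{d}s,
\]
and a one-line check shows $2\varepsilon w_\varepsilon(\varepsilon s)=\tilde w_\varepsilon(s)$, so $U$ is isometric (and clearly onto). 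I would take the domain of $\tilde L_\varepsilon$ to be $U(\dom{L_\varepsilon})$, so unitary equivalence is built in, and the task reduces to verifying the formula \eqnref{eq:lepst} on the smooth core. Setting $g=Uf$ so that $g'(s)=\varepsilon f'(\varepsilon s)$ and noting that $p_\varepsilon(\varepsilon s)=\tilde p_\varepsilon(s)$, a direct chain-rule computation gives
\[
\frac{\mathrm{d}}{\mathrm{d}x}\bigl(p_\varepsilon f'\bigr)(\varepsilon s)=\varepsilon^{-2}\bigl(\tilde p_\varepsilon g'\bigr)'(s),
\]
and combining this with the relation $w_\varepsilon(\varepsilon s)^{-1}=2\varepsilon\,\tilde w_\varepsilon(s)^{-1}$ cleanly cancels all the factors of $\varepsilon$ and yields $(\tilde L_\varepsilon g)(s)=-\tilde w_\varepsilon(s)^{-1}(\tilde p_\varepsilon g')'(s)$.

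Next I would verify the closed form \eqnref{eq:gameps} for $\gamma_\varepsilon$. The substitution $v=(1+\varepsilon u)/(1-\varepsilon u)$ gives $\mathrm{d}v=2\varepsilon(1-\varepsilon u)^{-2}\mathrm{d}u$, and a short calculation rewrites the integrand $\tilde p_\varepsilon(u)^{-1}=(1-\varepsilon u)^{-1-1/\varepsilon}(1+\varepsilon u)^{-1+1/\varepsilon}$ as $(1-\varepsilon u)^{-2}v^{-1+1/\varepsilon}$, so that
\[
\gamma_\varepsilon(s)=\frac{1}{2\varepsilon}\int_1^{(1+\varepsilon s)/(1-\varepsilon s)} v^{-1+1/\varepsilon}\,\mathrm{d}v=\tfrac12\bigl[((1+\varepsilon s)/(1-\varepsilon s))^{1/\varepsilon}-1\bigr].
\]

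To confirm the Green's function formula I would compute $R_\varepsilon f$ directly. Writing
\[
(R_\varepsilon f)(s)=\gamma_\varepsilon(s)\!\int_s^{1/\varepsilon}\! f(t)\tilde w_\varepsilon(t)\,\mathrm{d}t+\int_0^s\!\gamma_\varepsilon(t)f(t)\tilde w_\varepsilon(t)\,\mathrm{d}t,
\]
the boundary terms in the first derivative cancel and one obtains $\tilde p_\varepsilon(s)(R_\varepsilon f)'(s)=\int_s^{1/\varepsilon} f(t)\tilde w_\varepsilon(t)\,\mathrm{d}t$; differentiating once more gives $-(\tilde p_\varepsilon(R_\varepsilon f)')'=\tilde w_\varepsilon f$, i.e.\ $\tilde L_\varepsilon R_\varepsilon f=f$. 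The boundary value $(R_\varepsilon f)(0)=0$ is automatic from $\gamma_\varepsilon(0)=0$, matching the Dirichlet condition at $0$ inherited from $\dom{L_\varepsilon}$.

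The only genuinely delicate point, and the one I would flag as the main obstacle, is the assertion that $\tilde L_\varepsilon$ is actually \emph{invertible} on all of $L^2((0,1/\varepsilon),\tilde w_\varepsilon \mathrm{d}s)$, rather than merely possessing $R_\varepsilon$ as a right inverse on smooth functions. This I would settle by invoking the results of \cite{weir-2008}: $L_\varepsilon$ is self-adjoint with compact resolvent and strictly positive spectrum, hence so is the unitarily equivalent $\tilde L_\varepsilon$. The equation $\tilde L_\varepsilon g=f$ therefore has a unique solution for every $f$, and the computation above identifies that solution as $R_\varepsilon f$. (An elementary alternative is to observe that any $g$ in the kernel of $\tilde L_\varepsilon$ satisfies $\tilde p_\varepsilon g'\equiv C$, hence $g=A+B\gamma_\varepsilon$; the Dirichlet condition forces $A=0$, and the blow-up $\gamma_\varepsilon(s)\to\infty$ as $s\to1/\varepsilon^-$ combined with the weight $\tilde w_\varepsilon$ forces $B=0$.)
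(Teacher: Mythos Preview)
Your proof is correct and follows essentially the same approach as the paper: the same change-of-variable unitary $(Uf)(s)=f(\varepsilon s)$, the same appeal to \cite{weir-2008} for the invertibility of $L_\varepsilon$, and the same explicit evaluation of $\gamma_\varepsilon$. The only cosmetic difference is that the paper transports the Green's-function formula for $L_\varepsilon^{-1}$ from \cite{weir-2008} via the unitary equivalence, whereas you re-derive $\tilde L_\varepsilon R_\varepsilon f=f$ from scratch; both routes are fine.
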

\begin{proof}
We define an operator $U:L^2((0,1),2w_{\varepsilon}(x) \mathrm{d}x) \to L^2((0,1/\varepsilon),\tilde{w}_{\varepsilon}(s)\mathrm{d}s)$ by
\begin{equation}
	(Uf)(s) = f(\varepsilon s)
\end{equation}
for all $f \in L^2((0,1),2w_{\varepsilon}(x) \mathrm{d}x)$, $s \in (0,1/\varepsilon)$. It is easy to show that $U$ is a unitary operator. We define $\tilde{L}_{\varepsilon} = UL_{\varepsilon}U^{-1}$. A simple calculation shows that \eqnref{eq:lepst} holds.
The analogous result about the inverse of $L_{\varepsilon}$ was proven in \cite{weir-2008}, so the result about $\tilde{L}_{\varepsilon}$ follows from the unitary equivalence of the two operators. It only remains for us to calculate
\begin{eqnarray*}
	\gamma_{\varepsilon}(s)	& = & \int_0^s \tilde{p}_{\varepsilon}(u)^{-1} \mathrm{d}u\\
													& = & \frac{1}{\varepsilon}\int_0^{\varepsilon s} p_{\varepsilon}(u)^{-1}\mathrm{d}u\\
													& = & \frac{1}{\varepsilon}\int_0^{\varepsilon s} \frac{1}{1-u^2}\left(\frac{1+u}{1-u}\right)^{1/\varepsilon} \mathrm{d}u\\
													& = & \frac{1}{\varepsilon}\int_0^{\varepsilon s} \frac{1}{(1-u)^2}\left(\frac{1+u}{1-u}\right)^{1/\varepsilon-1} \mathrm{d}u\\
													& = & \frac{1}{2} \left[\left(\frac{1+u}{1-u}\right)^{1/\varepsilon}\right]_0^{\varepsilon s}\\
													& = & \frac{1}{2} \left\{\left(\frac{1+\varepsilon s}{1-\varepsilon s}\right)^{1/\varepsilon} - 1 \right\}.
\end{eqnarray*}
\qed
\end{proof}

\begin{theorem}\label{thm:unitary}
The operator $L_\varepsilon$ is unitarily equivalent to an operator $M_\varepsilon$ on $L^2((0,1/\varepsilon),\mathrm{d}s)$ such that
\begin{equation}
	(M_{\varepsilon}^{-1}f)(s) = \int_0^{1/\varepsilon} K_{\varepsilon}(s,t)f(t)\mathrm{d}t
\end{equation}
for all $f \in L^2((0,1/\varepsilon),\mathrm{d}s)$, where
\begin{equation}\label{eq:keps}
	K_{\varepsilon}(s,t) = \left\{\begin{array}{cc} (st)^{-1/2}\left(\frac{1-\varepsilon s}{1+\varepsilon s}\right)^{1/2\varepsilon}\left\{\left(\frac{1+\varepsilon s}{1-\varepsilon s}\right)^{1/\varepsilon} - 1 \right\}\left(\frac{1-\varepsilon t}{1+\varepsilon t}\right)^{1/2\varepsilon}	& \textrm{if } 0 \leq s \leq t\\
																									 (st)^{-1/2}\left(\frac{1-\varepsilon s}{1+\varepsilon s}\right)^{1/2\varepsilon}\left\{\left(\frac{1+\varepsilon t}{1-\varepsilon t}\right)^{1/\varepsilon} - 1 \right\}\left(\frac{1-\varepsilon t}{1+\varepsilon t}\right)^{1/2\varepsilon}	& \textrm{if } 0 \leq t \leq s.
																 \end{array}\right.
\end{equation}
Moreover, if we define $N_{\varepsilon}$ on $L^2((0,\infty),\mathrm{d}s)$ by
\begin{equation}
	(N_{\varepsilon}f)(s) = \int_0^{\infty} \tilde{K_{\varepsilon}}(s,t)f(t)\mathrm{d}t
\end{equation}
for all $f \in L^2((0,\infty),\mathrm{d}s)$, where
\begin{equation}
	\tilde{K_{\varepsilon}}(s,t) = \left\{	\begin{array}{cc}	 K_{\varepsilon}(s,t)	& \textrm{if } 0 < s,t \leq 1/\varepsilon\\
																														 0											&	\textrm{otherwise},
																					 \end{array}\right.
\end{equation}
then $N_{\varepsilon}$ has the same non-zero eigenvalues as $L_\varepsilon^{-1}$, and each non-zero eigenvalue has the same multiplicity with respect to each operator.
\end{theorem}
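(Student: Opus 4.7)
The plan is to pass from the weighted space $L^2((0,1/\varepsilon),\tilde w_\varepsilon(s)\,ds)$ of Lemma \ref{lem:lepstrans} to the unweighted space $L^2((0,1/\varepsilon),ds)$ by a multiplication unitary, and then to observe that extending the resulting integral kernel by zero simply adds a zero summand. Concretely, I would define
\[
V\colon L^2((0,1/\varepsilon),\tilde w_\varepsilon(s)\,ds) \longrightarrow L^2((0,1/\varepsilon),ds), \qquad (Vf)(s)=\tilde w_\varepsilon(s)^{1/2}f(s),
\]
which is a unitary isomorphism since it intertwines the two natural $L^2$-norms, and set $M_\varepsilon:=V\tilde L_\varepsilon V^{-1}$. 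Combining this with Lemma \ref{lem:lepstrans} yields that $L_\varepsilon$ is unitarily equivalent to $M_\varepsilon$.

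Next I would compute $M_\varepsilon^{-1}=VR_\varepsilon V^{-1}$. Writing everything out,
\[
(M_\varepsilon^{-1}f)(s)=\tilde w_\varepsilon(s)^{1/2}\int_0^{1/\varepsilon} G_\varepsilon(s,t)\,\tilde w_\varepsilon(t)^{-1/2}f(t)\,\tilde w_\varepsilon(t)\,dt=\int_0^{1/\varepsilon} K_\varepsilon(s,t)f(t)\,dt,
\]
where $K_\varepsilon(s,t)=\tilde w_\varepsilon(s)^{1/2}\,G_\varepsilon(s,t)\,\tilde w_\varepsilon(t)^{1/2}$. Substituting the explicit formulas for $\tilde w_\varepsilon$ and for $\gamma_\varepsilon$ from \eqnref{eq:gameps} (recalling that $\gamma_\varepsilon$ is increasing, so $G_\varepsilon(s,t)=\gamma_\varepsilon(\min(s,t))$) gives precisely the kernel \eqnref{eq:keps}; this is the one explicit calculation, and the factor of $2$ from $\tilde w_\varepsilon$ combines with the factor of $\tfrac{1}{2}$ inside $\gamma_\varepsilon$ to produce the clean form stated in the theorem.

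For the claim about $N_\varepsilon$, the key observation is that $\tilde K_\varepsilon$ is supported in $(0,1/\varepsilon)\times(0,1/\varepsilon)$. Hence, under the orthogonal decomposition $L^2((0,\infty),ds)=L^2((0,1/\varepsilon),ds)\oplus L^2((1/\varepsilon,\infty),ds)$, functions on $(1/\varepsilon,\infty)$ are annihilated by $N_\varepsilon$ (the $t$-integral vanishes), and for $s>1/\varepsilon$ the image $(N_\varepsilon f)(s)=0$; it follows that $N_\varepsilon=M_\varepsilon^{-1}\oplus 0$ with respect to this decomposition. Therefore $\spec{N_\varepsilon}\setminus\{0\}=\spec{M_\varepsilon^{-1}}\setminus\{0\}$ with the same multiplicities, and since $M_\varepsilon$ is unitarily equivalent to $L_\varepsilon$, the non-zero eigenvalues of $N_\varepsilon$ coincide with those of $L_\varepsilon^{-1}$, including multiplicity.

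I expect no serious obstacle: the first part is bookkeeping of an explicit multiplication unitary applied to a known resolvent kernel, and the second part follows from the block-diagonal structure forced by the support of $\tilde K_\varepsilon$. The only place one needs to be careful is to confirm that the algebraic simplification of $\tilde w_\varepsilon(s)^{1/2}\,\gamma_\varepsilon(\min(s,t))\,\tilde w_\varepsilon(t)^{1/2}$ produces the asymmetric-looking expression \eqnref{eq:keps}, where the factor $\bigl\{(\tfrac{1+\varepsilon u}{1-\varepsilon u})^{1/\varepsilon}-1\bigr\}$ is evaluated at $u=\min(s,t)$, while the $(\tfrac{1-\varepsilon\cdot}{1+\varepsilon\cdot})^{1/(2\varepsilon)}$ factors are evaluated symmetrically at both $s$ and $t$.
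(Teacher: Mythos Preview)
Your proposal is correct and is essentially identical to the paper's proof: your unitary $V$ is exactly the inverse of the paper's $J_\varepsilon$, so the conjugation $V\tilde L_\varepsilon V^{-1}=J_\varepsilon^{-1}\tilde L_\varepsilon J_\varepsilon$ is the same, and the paper likewise derives $K_\varepsilon(s,t)=\tilde w_\varepsilon(s)^{1/2}G_\varepsilon(s,t)\tilde w_\varepsilon(t)^{1/2}$ and concludes with the observation $N_\varepsilon=M_\varepsilon^{-1}\oplus 0$. Your remark about the factor of $2$ from $\tilde w_\varepsilon$ cancelling the $\tfrac12$ in $\gamma_\varepsilon$ is exactly the ``substituting in expression \eqnref{eq:gameps}'' step in the paper.
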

\begin{proof}
We define a unitary operator $J_{\varepsilon}:L^2((0,1/\varepsilon),\mathrm{d}s) \to L^2((0,1/\varepsilon),\tilde{w}_{\varepsilon}(s)\mathrm{d}s)$ by
\begin{equation}
(J_{\varepsilon}f)(s) = \tilde{w}_{\varepsilon}(s)^{-1/2}f(s)
\end{equation}
and then put $M_{\varepsilon} = J_{\varepsilon}^{-1}\tilde{L}_{\varepsilon}J_{\varepsilon}$. We then have $M_{\varepsilon}^{-1} = J_{\varepsilon}^{-1}R_{\varepsilon}J_{\varepsilon}$, so
\begin{equation}
	(M_{\varepsilon}^{-1}f)(s) = \int_0^{1/\varepsilon} K_{\varepsilon}(s,t)f(t)\mathrm{d}t
\end{equation}
for all $f \in L^2((0,1/\varepsilon),\mathrm{d}s)$, where
\begin{eqnarray*}
	K_{\varepsilon}(s,t)	& = & \tilde{w}_{\varepsilon}(s)^{1/2}G_{\varepsilon}(s,t)\tilde{w}_{\varepsilon}(t)^{1/2}\\
												& = & \left\{ \begin{array}{cc}	 \tilde{w}_{\varepsilon}(s)^{1/2}\gamma_{\varepsilon}(s)\tilde{w}_{\varepsilon}(t)^{1/2}	 & \textrm{if } 0 < s \leq t < 1/\varepsilon, \\
\tilde{w}_{\varepsilon}(s)^{1/2}\gamma_{\varepsilon}(t)\tilde{w}_{\varepsilon}(t)^{1/2}	 & \textrm{if } 0 < t \leq s < 1/\varepsilon.
																			 \end{array}\right.
\end{eqnarray*}
Substituting in expression \eqnref{eq:gameps} for $\gamma_\varepsilon$, we obtain \eqnref{eq:keps}.

There is no unitary equivalence between $M_{\varepsilon}^{-1}$ and $N_{\varepsilon}$, but it is easy to see that they have the same eigenvalues, and that the non-zero eigenvalues have the same multiplicities with respect to each operator. Indeed, \[N_{\varepsilon} = M_{\varepsilon}^{-1} \oplus 0,\] where $0$ is the zero operator acting on $L^2((1/\varepsilon,\infty), \mathrm{d}s)$. The theorem now follows from the unitary equivalence established in Lemma \ref{lem:lepstrans}.
\qed
\end{proof}

\section{The limit operator}

In this section we consider the operator $L_0$ on $\mathcal{H} = L^2((0,\infty),w_0(s)\mathrm{d}s)$, where
\begin{equation}
w_0(s) = \lim_{\varepsilon\to 0} \tilde{w}_{\varepsilon}(s) = 2s^{-1}\e^{-2s},
\end{equation}
defined on some suitable domain, which we identify below, by
\begin{equation}\label{eq:l0def}
	(L_0f)(s) = -w_0(s)^{-1}(p_0f')'(s),
\end{equation}
where $p_0(s)= \lim_{\varepsilon \to 0} \tilde{p}_{\varepsilon}(s) = \e^{-2s}$. We shall show that $L_0$ has a self-adjoint extension, which is invertible and whose inverse is Hilbert-Schmidt. We then identify the spectrum of $\bar{L_0}$ and the integral kernel of its inverse.

\begin{definition}
Let $\mathcal{P}$ denote the set of all polynomials on $(0,\infty)$ and $s\mathcal{P}$ denote those elements of $\mathcal{P}$ which have constant term zero.
\end{definition}

\begin{definition}
Let $\dom{L_0}$ be the set of twice differentiable functions $g:(0,\infty)\to\setC$ such that $\lim_{s\to 0+}g(s)=0$, $\limsup_{s \to 0+}\mod{g'(s)}<\infty$, $\lim_{s \to \infty}\e^{-s}g(s)=0$, $\lim_{s \to \infty}\e^{-s}g'(s)=0$ and $L_0f \in \mathcal{H}$.
\end{definition}

\begin{remark}
It is easy to show that $\dom{L_0} \subset \mathcal{H}$. We may therefore define $L_0$ on this domain by equation \eqnref{eq:l0def}. Note also that $s\mathcal{P} \subset \dom{L_0}$.
\end{remark}

\begin{lemma}
The set $\mathcal{P}$ is dense in $L^2((0,\infty), 2s\e^{-2s}\mathrm{d}s)$.
\end{lemma}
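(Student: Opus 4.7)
The plan is to show that the orthogonal complement of $\mathcal{P}$ in $L^2((0,\infty), 2s\e^{-2s}\mathrm{d}s)$ is trivial by a Laplace transform argument. I take $f$ to be an element of this orthogonal complement, so that $\int_0^\infty f(s) s^n \cdot 2s\e^{-2s}\mathrm{d}s = 0$ for every $n \geq 0$, and aim to conclude that $f = 0$ almost everywhere.

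The key step is to introduce
\[g(z) = \int_0^\infty f(s)\e^{-zs} \cdot 2s\e^{-2s}\mathrm{d}s\]
on the open half-plane $\Omega = \{z \in \setC : \re{z} > -1\}$. By Cauchy--Schwarz,
\[\mod{g(z)} \leq \norm{f} \left(\int_0^\infty 2s\e^{-2(1+\re{z})s}\mathrm{d}s\right)^{1/2},\]
with $\norm{\cdot}$ the norm in $L^2((0,\infty), 2s\e^{-2s}\mathrm{d}s)$, and the right-hand side is finite throughout $\Omega$. A routine local-domination argument over compact subsets of $\Omega$ then shows that $g$ is holomorphic on $\Omega$ and that differentiation under the integral sign is valid, giving
\[g^{(n)}(0) = (-1)^n \int_0^\infty f(s) s^n \cdot 2s\e^{-2s}\mathrm{d}s = 0\]
for every $n \geq 0$.

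Because $0 \in \Omega$ is an interior point and every Taylor coefficient of $g$ at $0$ vanishes, $g$ is identically zero on a neighbourhood of $0$, and hence on all of $\Omega$ by analytic continuation. For real $z > 0$, $g(z)$ is the Laplace transform of $s \mapsto 2sf(s)\e^{-2s}$, which lies in $L^1((0,\infty),\mathrm{d}s)$ by another application of Cauchy--Schwarz. Injectivity of the Laplace transform on $L^1$ then forces $f = 0$ almost everywhere, establishing density. The only real technical obstacle is the justification of holomorphy and of the interchange of differentiation and integration, both of which reduce to a single dominated-convergence step; as an alternative route, one could deduce the result at once from the classical completeness of the generalised Laguerre polynomials $L_n^{(1)}$ in $L^2((0,\infty), t\e^{-t}\mathrm{d}t)$ via the change of variables $t = 2s$.
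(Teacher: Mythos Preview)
Your proof is correct and rests on the same underlying idea as the paper's: if all moments of the weighted function vanish, then an associated integral transform is analytic with identically zero Taylor coefficients at an interior point, hence vanishes identically, and injectivity of the transform forces the function to be zero. The execution differs in the choice of transform. The paper works with the Fourier transform: it sets $F(s)=g(s)\,2s\,\e^{-2s}\chi_{(0,\infty)}(s)$ on $\setR$, invokes a Paley--Wiener theorem (Reed--Simon, Theorem~IX.13) to obtain analyticity of $\hat F$ on the strip $\{z:\mod{\Im z}<1\}$, and then, rather than differentiating under the integral, bounds $\mod{\hat F(\xi)}$ for small real $\xi$ by replacing $\e^{-i\xi x}$ with its Taylor polynomial (killed by the orthogonality hypothesis) and explicitly estimating the remainder via Stirling's formula. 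Your Laplace-transform route is a little more streamlined: since $z=0$ lies in the interior of the half-plane $\{\re{z}>-1\}$, routine differentiation under the integral sign together with the identity theorem finishes the argument without any explicit remainder estimate, and injectivity of the Laplace transform on $L^1$ plays the same role as Fourier inversion does in the paper. Your aside about the generalised Laguerre polynomials $L_n^{(1)}$ is also a legitimate shortcut; the paper notes the Laguerre connection only after the lemma, and in the exceptional case $\alpha=-1$, rather than invoking it here.
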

\begin{proof}
Let $g \in L^2((0,\infty), 2s\e^{-2s}\mathrm{d}s)$ be such that $g \perp \mathcal{P}$ and define
\[ f(s) = g(s) 2s \e^{-2s} \chi_{(0,\infty)}(s) \]
for all $s \in \setR$. We need to prove that $g=0$, or equivalently that $f=0$. We first prove that $f \in L^2(\setR)$ and the Fourier transform $\hat{f}$ of $f$ has an analytic continuation to the set $\{z : \mod{\Im{z}}<1\}$. We then prove that $\hat{f}$ is zero on an interval containing zero, and hence is identically zero. The invertibility of the Fourier transform then implies that $f=0$.

Firstly
\[ \int_{\setR} \mod{f(s)}^2 \mathrm{d}s \leq \sup_{s \in (0,\infty)} 2s\e^{-2s} \int_0^{\infty} \mod{g(s)}^2 2s\e^{-2s}\mathrm{d}s < \infty \]
so $f \in L^2(\setR)$. If $b < 1$ then a similar estimate shows that $\e^{b\mod{s}}f \in L^2(\setR)$. Theorem IX.13 of \cite{reedsimonII} implies that $\hat{f}$ has an analytic continuation to $\{z : \mod{\Im{z}}<1\}$.

In order to evaluate $\hat{f}$ in a neighbourhood of zero, we first approximate $\e^{-i\xi x}$ by polynomials:
\begin{eqnarray*}
	\mod{\e^{-i\xi x} - \sum_{n=0}^{N-1} \frac{(-i\xi x)^n}{n!}}	& \leq & \sum_{n=N}^{\infty} \frac{\mod{x}^n\mod{\xi}^n}{n!}\\
	& = & \mod{x}^N\mod{\xi}^N \sum_{n=0}^{\infty} \frac{\mod{x}^n\mod{\xi}^n}{(n+N)!}\\
	& \leq & \frac{\mod{x}^N\mod{\xi}^N}{N!}\sum_{n=0}^{\infty} \frac{\mod{x}^n\mod{\xi}^n}{n!}\\
	& = & \frac{\mod{x}^N\mod{\xi}^N}{N!} \e^{\mod{x\xi}}
\end{eqnarray*}
for all $x \in \setR$, $\xi \in \setR$, $N \in \setN$. We now choose $\beta \in (0,1)$. If $\delta > 0$ is sufficiently small then $\e^{(\beta+\delta)\mod{x}}f \in L^2(\setR)$. Also $\e^{-\delta\mod{x}} \in L^2(\setR)$, so $\e^{\beta\mod{x}}f \in L^1(\setR)$. If $\mod{\xi} < \beta/2$ then
\begin{eqnarray*}
	\mod{\hat{f}(\xi)} & = & \mod{\int_{\setR}\e^{-i\xi x}f(x)\mathrm{d}x}\\
	& \leq & \mod{\int_{\setR}\left\{\e^{-i\xi x}-\sum_{n=0}^{N-1} \frac{(-i\xi x)^n}{n!}\right\}f(x)\mathrm{d}x} + \mod{\int_{\setR}\sum_{n=0}^{N-1} \frac{(-i\xi x)^n}{n!}f(x)\mathrm{d}x}\\
	& \leq & \int_{\setR} \frac{\mod{x}^N\mod{\xi}^N}{N!} \e^{\mod{x\xi}} \mod{f(x)}\mathrm{d}x + \mod{\int_0^{\infty}\sum_{n=0}^{N-1} \frac{(-i\xi x)^n}{n!}g(x)2x\e^{-2x}\mathrm{d}x}\\
	& \leq & \sup_{x \in (0,\infty)}\frac{\mod{x}^N\mod{\xi}^N}{N!}\e^{(\mod{\xi}-\beta)\mod{x}}\int_{\setR}\mod{f(x)} \e^{\beta \mod{x}}\mathrm{d}x
\end{eqnarray*}
for all $N \in \setN$. The final integral is finite since $\e^{\beta\mod{x}}f \in L^1(\setR)$. We put $h_{\xi,N}(x) = \frac{\mod{x}^N\mod{\xi}^N}{N!}\e^{(\mod{\xi}-\beta)\mod{x}}$ for all $x \in (0,1)$ and note that this is a smooth, positive function of $x$. As $x \to 0$ or $x \to \infty$, $h(x) \to 0$, so the supremum of $h$ must be obtained at a local maximum in the interval $(0,\infty)$. The only zero of $h'$ is at $N/(\beta-\mod{\xi})$ so
\begin{eqnarray*}
	\sup_{x \in (0,\infty)}h(x) & = & h(N/(\beta-\mod{\xi}))\\
	& = & \frac{N^N \e^{-N}}{N!}\frac{\mod{\xi}^N}{(\beta-\mod{\xi})^N}\\
	& \sim & (2\pi N)^{-1/2}\frac{\mod{\xi}^N}{(\beta-\mod{\xi})^N}\\
	& \to & 0
\end{eqnarray*}
as $N \to \infty$ for all $\xi \in [-\beta/2, \beta/2]$. Hence $\hat{f}(\xi) = 0$ for all $\xi \in [-\beta/2, \beta/2]$.
\qed
\end{proof}

\begin{corollary}\label{cor:dense}
The set $s\mathcal{P}$ is dense in $\mathcal{H}$.
\end{corollary}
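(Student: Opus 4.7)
The plan is to deduce this corollary directly from the preceding lemma by means of a unitary rescaling. The two weighted spaces involved differ by a factor of $s^2$ in their weights: $\mathcal{H}$ has weight $w_0(s) = 2s^{-1}\e^{-2s}$ while the lemma concerns the weight $2s\e^{-2s}$. This suggests introducing the multiplication-by-$s$ map.

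Specifically, I would define $U : \mathcal{H} \to L^2((0,\infty), 2s\e^{-2s}\mathrm{d}s)$ by $(Ug)(s) = g(s)/s$ and observe that the identity
\[
	\int_0^\infty \mod{g(s)/s}^2 \cdot 2s\e^{-2s}\,\mathrm{d}s = \int_0^\infty \mod{g(s)}^2 \cdot 2s^{-1}\e^{-2s}\,\mathrm{d}s
\]
shows that $U$ is an isometric bijection (indeed, a unitary isomorphism), with inverse given by multiplication by $s$. Now given any $g \in \mathcal{H}$, its image $Ug$ lies in $L^2((0,\infty), 2s\e^{-2s}\mathrm{d}s)$, and by the previous lemma there exists a sequence $(p_n) \subset \mathcal{P}$ with $p_n \to Ug$ in that space. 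Applying $U^{-1}$ (which preserves norms) yields $sp_n \to g$ in $\mathcal{H}$. Since $sp_n \in s\mathcal{P}$ for each $n$, this proves the density.

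There is essentially no obstacle: the argument is a one-line unitary rescaling reducing the corollary to the lemma. The only thing worth checking is that $U^{-1}$ really does send $\mathcal{P}$ into $s\mathcal{P}$, which is immediate since multiplying a polynomial by $s$ produces a polynomial with zero constant term.
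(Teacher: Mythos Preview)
Your proof is correct and is essentially identical to the paper's: the paper defines the unitary $U:L^2((0,\infty),2s\e^{-2s}\mathrm{d}s)\to\mathcal{H}$ by $(Uf)(s)=sf(s)$ and notes $U\mathcal{P}=s\mathcal{P}$, which is precisely your map in the inverse direction.
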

\begin{proof}
It is easy to show that the operator $U:L^2((0,\infty),2s\e^{-s}\mathrm{d}s) \to \mathcal{H}$ defined by
\[ (Uf)(s) = sf(s) \]
for all $s \in (0,\infty)$ is unitary and that $U\mathcal{P}=s\mathcal{P}$.
\qed
\end{proof}

\begin{theorem}\label{thm:cons}
The operator $L_0$ is symmetric and there is a complete orthonormal sequence $\{e_n\}_{n \in \setN}$ in $\mathcal{H}$ such that, for each $n \in \setN$, $e_n \in \dom{L_0}$ and $L_0e_n = ne_n$.
\end{theorem}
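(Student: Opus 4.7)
The plan is to reduce the eigenvalue equation to the associated Laguerre equation and then deduce completeness from Corollary \ref{cor:dense}. Expanding $L_0$ using $w_0(s)=2s^{-1}\e^{-2s}$ and $p_0(s)=\e^{-2s}$ gives $(L_0 f)(s) = -\frac{1}{2} s f''(s) + s f'(s)$, so the eigenvalue equation $L_0 f = \lambda f$ becomes $s f'' - 2 s f' + 2\lambda f = 0$. Since every $g \in \dom{L_0}$ vanishes at $0$, I try the ansatz $f(s) = s p(s)$; substituting and dividing by $s$ gives $s p'' + 2(1-s) p' + 2(\lambda - 1) p = 0$, and the rescaling $x = 2s$ puts this in the standard form $x q'' + (2-x) q' + (\lambda-1) q = 0$. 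This is the associated Laguerre equation with parameter $\alpha = 1$, which admits a polynomial solution precisely when $\lambda - 1 \in \{0,1,2,\ldots\}$. For each $n \in \setN$ I therefore set $e_n(s) = c_n s L_{n-1}^{(1)}(2s)$ with normalising constants $c_n$ to be fixed below; then $e_n \in s\mathcal{P} \subset \dom{L_0}$ and $L_0 e_n = n e_n$.

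Symmetry of $L_0$ will follow from integration by parts. For $f, g \in \dom{L_0}$,
\[ \ip{L_0 f}{g}_{\mathcal{H}} = -\int_0^{\infty} (p_0 f')'(s)\, \conj{g(s)}\,\mathrm{d}s = -\Bigl[p_0(s) f'(s) \conj{g(s)}\Bigr]_0^{\infty} + \int_0^{\infty} p_0(s) f'(s) \conj{g'(s)}\,\mathrm{d}s. \]
Because $p_0(s) = \e^{-2s}$, the conditions $\lim_{s\to\infty}\e^{-s} f'(s) = 0$ and $\lim_{s\to\infty}\e^{-s} g(s) = 0$ kill the boundary term at infinity, while $\lim_{s \to 0+} g(s) = 0$ combined with $\limsup_{s \to 0+}\mod{f'(s)} < \infty$ kills it at zero. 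The surviving expression is symmetric in $f$ and $g$, so $L_0$ is symmetric. Distinct eigenvalues then force orthogonality of the $e_n$, and the constants $c_n$ are determined by the classical identity $\int_0^{\infty} L_m^{(1)}(x) L_n^{(1)}(x) x \e^{-x}\,\mathrm{d}x = (n+1)\delta_{mn}$ after substituting $x = 2s$ in the $\mathcal{H}$-inner product.

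For completeness, observe that $L_{n-1}^{(1)}$ has degree $n-1$, so the family $\{e_n\}_{n=1}^{N}$ spans $\{s p(s) : p \in \mathcal{P},\ \deg p \leq N - 1\}$, and consequently $\lin\{e_n : n \in \setN\} = s\mathcal{P}$. Corollary \ref{cor:dense} asserts that $s\mathcal{P}$ is dense in $\mathcal{H}$, which gives completeness of $\{e_n\}_{n \in \setN}$.

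The main obstacle is spotting the two substitutions $f(s) = s p(s)$ followed by $x = 2 s$ that reveal the Laguerre structure concealed in $L_0$. Once these are in hand the eigenfunctions are explicit, symmetry is a one-line integration by parts that matches the built-in boundary conditions of $\dom{L_0}$ exactly, and completeness comes for free from the density result already proved.
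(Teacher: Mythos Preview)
Your proof is correct and structurally mirrors the paper's: the symmetry argument via integration by parts is identical, and completeness is obtained in the same way by observing that $\lin\{e_n\}=s\mathcal{P}$ and invoking Corollary~\ref{cor:dense}. The only real difference lies in how the eigenfunctions are produced. The paper constructs them by hand, writing $f_n(s)=\sum_{r=1}^n a_{n,r}s^r$ with the coefficients fixed by a two-term recursion, and then verifies $L_0f_n=nf_n$ directly; you instead make the substitutions $f(s)=sp(s)$ and $x=2s$ to reduce the problem to the associated Laguerre equation with $\alpha=1$ and read off $e_n(s)=c_n\,sL_{n-1}^{(1)}(2s)$. Your route is slicker and makes the normalisation explicit via the classical orthogonality relation, at the cost of importing facts about Laguerre polynomials; the paper's recursion is entirely self-contained. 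Amusingly, the paper remarks just after its proof that the problem is the exceptional $\alpha=-1$ Laguerre case, whereas your factorisation $f=sp$ shifts it to the regular $\alpha=1$ case --- both viewpoints are consistent, since multiplying by $s$ moves the weight from $s^{-1}\e^{-2s}$ to $s\e^{-2s}$.
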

\begin{proof}
For all $f$, $g \in \dom{L_0}$
\begin{eqnarray*}
	\ip{L_0f}{g} &=& -\int_0^{\infty} (p_0f')'(s)\conj{g(s)}\mathrm{d}s\\
	&=&  p_0(0)f'(0)\conj{g(0)} - \lim_{n\to\infty} p_0(n)f'(n)\conj{g(n)} +\int_0^{\infty} p_0(s)f'(s)\conj{g'(s)}\mathrm{d}s\\
	&=& \lim_{n \to \infty} p_0(n)f(n)\conj{g'(n)} - p_0(0)f(0)\conj{g'(0)} -\int_0^{\infty} f(s)\conj{(p_0g')'(s)}\mathrm{d}s\\
	&=& \ip{f}{L_0g}
\end{eqnarray*}
since $f(0)=g(0)=0$, \[\lim_{n \to \infty}p_0(n)f'(n)\conj{g(n)} = \left(\lim_{n \to \infty}\e^{-n}f'(n)\right)\left(\lim_{n \to \infty}\e^{-n}\conj{g(n)}\right) = 0\] and similarly $\lim_{n \to \infty}p_0(n)f(n)\conj{g'(n)} = 0$. Therefore $L_0$ is symmetric.

For all $n \in \setN$, define $a_{n,r}$ recursively for $r=1, \ldots, n$ by
\begin{eqnarray}
	a_{n,n} &=& 1\\
	a_{n,r} &=& -\frac{r(r+1)}{2(n-r)}a_{n,r+1}\textrm{ for }r=1,\ldots,n-1
\end{eqnarray}
and put
\begin{equation}
	f_n(s) = \sum_{r=1}^n a_{n,r}s^r
\end{equation}
for all $s \in (0,\infty)$, $n \in \setN$. Then, for each $n \in \setN$, $f_n \in \dom{L_0}$ and
\begin{eqnarray*}
	(L_0f_n)(s) &=& -\frac{p_0(s)}{w_0(s)}f_n''(s) - \frac{p_0'(s)}{w_0(s)}f'(s)\\
	&=&-\frac{s}{2}f''(s) + sf'(s)\\
	&=&-\sum_{r=2}^n \frac{r(r-1)a_{n,r}}{2}s^{r-1} + \sum_{r=1}^n ra_{n,r}s^r\\
	&=& \sum_{r=1}^{n-1}\left\{ra_{n,r}-\frac{r(r+1)a_{n,r+1}}{2}\right\}s^r +na_{n,n}s^n\\
	&=& nf_n(s)
\end{eqnarray*}
for all $s\in (0,\infty)$. Since the $f_n$ are eigenvectors corresponding to distinct eigenvalues and $L_0$ is symmetric, they are orthogonal. Putting $e_n = f_n\norm{f_n}^{-1}$, we obtain an orthonormal sequence such that $L_0e_n=ne_n$. Each $e_n$ is a polynomial of degree $n$, so $\lin \{e_n\}_{n\in\setN} = s\mathcal{P}$. It now follows from Corollary \ref{cor:dense} that $\{e_n\}_{n\in\setN}$ is complete.
\qed
\end{proof}

The situation above can be seen as an exceptional case of that for the associated Laguerre polynomials $\{L_n^{(\alpha)}\}_{n=0}^{\infty}$ (see \cite{gradshteyn}). If $\alpha>-1$ these polynomials are a complete orthogonal set of eigenvectors, corresponding to eigenvalues $0,1,2,\ldots$ respectively, of the differential equation
\[xf'' + (\alpha + 1 - x)f' + \lambda f = 0\]
on $L^2((0,\infty),x^\alpha\e^{-x}\mathrm{d}x)$. After a change of variables, the eigenvalue equation for $L_0$ becomes the exceptional case $\alpha = -1$. The singularity at the origin requires special treatment, but otherwise the treatment of this case is the same as for $\alpha > -1$.

\begin{corollary}\label{cor:sa}
The operator $L_0$ is essentially self-adjoint, and the spectrum of its closure is precisely $\setN$.
\end{corollary}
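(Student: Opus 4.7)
The plan is to deduce essential self-adjointness directly from Theorem \ref{thm:cons} by invoking the standard criterion that a symmetric operator $T$ is essentially self-adjoint if and only if both ranges $\ran{T+iI}$ and $\ran{T-iI}$ are dense in the underlying Hilbert space. All of the analytic work has already been done in Theorem \ref{thm:cons}; what remains is a short application of abstract spectral theory to the orthonormal eigenbasis $\{e_n\}_{n\in\setN}$ that was produced there.

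First I would observe that for any finite linear combination $h = \sum_{n=1}^{N} c_n e_n$, the vector
\[
g_{\pm} = \sum_{n=1}^{N} \frac{c_n}{n \pm i}\, e_n
\]
lies in $\lin\{e_n\}_{n \in \setN} = s\mathcal{P} \subseteq \dom{L_0}$, and using $L_0 e_n = n e_n$ together with linearity one obtains $(L_0 \pm iI)g_{\pm} = h$. Since $\{e_n\}_{n \in \setN}$ is a complete orthonormal sequence in $\mathcal{H}$, finite linear combinations of the $e_n$ are dense in $\mathcal{H}$, and hence $\ran{L_0 \pm iI}$ is dense. By the deficiency-index criterion (see e.g.\ Reed and Simon, Theorem VIII.3), $L_0$ is essentially self-adjoint.

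To identify the spectrum of $\overline{L_0}$, I would argue that $\overline{L_0}$ is a self-adjoint operator possessing a complete orthonormal system of eigenvectors $\{e_n\}_{n\in\setN}$ with corresponding eigenvalues $\{n : n \in \setN\}$. The spectral theorem then renders $\overline{L_0}$ unitarily equivalent to multiplication by $n$ on $\ell^2(\setN)$, whose spectrum is the closure in $\setR$ of the set of eigenvalues. Since $\setN$ is a closed discrete subset of $\setR$, this closure equals $\setN$ itself, giving $\spec{\overline{L_0}} = \setN$.

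There is no genuine obstacle beyond what is already handled by Theorem \ref{thm:cons}; the only point requiring any care is that each $g_{\pm}$ really lies in $\dom{L_0}$ (not merely in $\dom{\overline{L_0}}$), which is immediate because $g_{\pm}$ is a polynomial with zero constant term and $s\mathcal{P} \subseteq \dom{L_0}$ was noted in the remark following the definition of $\dom{L_0}$.
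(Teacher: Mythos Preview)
Your proof is correct and is essentially the same as the paper's, which simply invokes Theorem \ref{thm:cons} together with Lemma 1.2.2 of \cite{stdo}; you have merely unpacked the content of that cited lemma (symmetric operator with a complete orthonormal eigenbasis $\Rightarrow$ essentially self-adjoint with spectrum the closure of the eigenvalue set) via the deficiency-index criterion and the spectral theorem. The explicit construction of $g_\pm \in s\mathcal{P} \subseteq \dom{L_0}$ is exactly the right way to verify density of $\ran{L_0 \pm iI}$, and your identification of the spectrum is standard.
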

\begin{proof}
The result follows immediately from the theorem and Lemma 1.2.2 of \cite{stdo}.
\qed
\end{proof}

\begin{theorem}
The operator $\bar{L_0}$ is invertible and its inverse $R$ is a Hilbert-Schmidt operator. Moreover,
\begin{equation}\label{eq:r0def}
	(R_0f)(s) = \int_0^{\infty} G_0(s,t)f(t)w_0(t)\mathrm{d}t
\end{equation}
for all $f \in \mathcal{H}$, where
\begin{equation}
	G_0(s,t) = \left\{	\begin{array}{cc}	\gamma_0(s) & \textrm{if } 0 < s \leq t\\
																				 \gamma_0(t) & \textrm{if } 0 < t \leq s
											\end{array}\right.
\end{equation}
and
\begin{eqnarray*}
	\gamma_0(s)	& = & \int_0^s p_0(u)^{-1} \mathrm{d}u\\
							& = & \int_0^s \e^{2u} \mathrm{d}u\\
							& = & \frac{1}{2}\left\{\e^{2s}-1\right\}.
\end{eqnarray*}
\end{theorem}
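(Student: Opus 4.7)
My plan is to split the theorem into three claims. Invertibility is immediate from Corollary~\ref{cor:sa}: $\bar{L_0}$ is self-adjoint with $\spec{\bar{L_0}} = \setN$, and since $0 \notin \setN$ the resolvent at zero exists and is bounded.

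Next I would identify this bounded inverse with $R_0$ by the standard Sturm--Liouville Green's function construction. The homogeneous equation $(p_0 g')' = 0$ has two linearly independent solutions, namely the constant $1$ and $\gamma_0(s) = (\e^{2s}-1)/2$, with normalisation $p_0 \gamma_0' = 1$; of these, $\gamma_0$ alone satisfies the left-endpoint condition $g(0)=0$, while $1$ alone satisfies $\e^{-s}g(s) \to 0$ at infinity. The kernel $G_0 = \min\{\gamma_0(s),\gamma_0(t)\}$ is built from them in the usual way. For $f \in s\mathcal{P}$, writing $g = R_0 f = \gamma_0(s)\int_s^\infty f w_0\,\mathrm{d}t + \int_0^s \gamma_0(t) f(t) w_0(t)\,\mathrm{d}t$ and differentiating gives $p_0 g' = \int_s^\infty f w_0\,\mathrm{d}t$, whence $-w_0^{-1}(p_0 g')' = f$ pointwise. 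The four boundary conditions defining $\dom{L_0}$ can then be verified by splitting $g$ into its two pieces and applying Cauchy--Schwarz, using $\gamma_0(s)=O(s)$ and $\int_s^\infty w_0 = O(-\log s)$ as $s \to 0+$, and $\gamma_0(s)\sim \e^{2s}/2$, $\int_s^\infty w_0 = O(s^{-1}\e^{-2s})$ as $s \to \infty$.

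To check the Hilbert-Schmidt property I would verify directly that
\[ \int_0^\infty \int_0^\infty G_0(s,t)^2 w_0(s) w_0(t)\,\mathrm{d}s\,\mathrm{d}t = 2\int_0^\infty \gamma_0(t)^2 w_0(t)\int_t^\infty w_0(s)\,\mathrm{d}s\,\mathrm{d}t < \infty. \]
The integrand is $O(-t\log t)$ near $t=0$ (since $\gamma_0(t)^2 w_0(t) \sim 2t$ and $\int_t^\infty w_0 \sim -2\log t$) and $O(t^{-2})$ near $t=\infty$ (the four exponential factors cancel), so both ends contribute finitely. As a sanity check, the squared Hilbert--Schmidt norm of $\bar{L_0}^{-1}$ must equal $\sum_{n=1}^\infty n^{-2}$ by the spectral theorem applied to the ONB $\{e_n\}$ of eigenvectors supplied by Theorem~\ref{thm:cons}.

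The step I expect to be the main obstacle is passing from the pointwise identity $L_0 R_0 f = f$ on the dense subset $s\mathcal{P}$ to the operator identity $\bar{L_0} R_0 = I$ on all of $\mathcal{H}$. The clean route is to use boundedness of $R_0$ (from the Hilbert--Schmidt bound) together with closedness of $\bar{L_0}$: given $f \in \mathcal{H}$, pick $f_k \in s\mathcal{P}$ with $f_k \to f$ in $\mathcal{H}$ (Corollary~\ref{cor:dense}); then $R_0 f_k \to R_0 f$ and $\bar{L_0} R_0 f_k = f_k \to f$, so $R_0 f \in \dom{\bar{L_0}}$ and $\bar{L_0} R_0 f = f$. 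Combined with the bounded invertibility of $\bar{L_0}$ established at the outset, this forces $R_0 = \bar{L_0}^{-1}$.
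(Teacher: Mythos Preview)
Your proposal is correct and follows essentially the same route as the paper: verify the Hilbert--Schmidt bound directly on $G_0$, check $L_0 R_0 f = f$ and $R_0 f \in \dom{L_0}$ for $f \in s\mathcal{P}$, then pass to all of $\mathcal{H}$ via density (Corollary~\ref{cor:dense}) and closedness of $\bar{L_0}$, finally invoking $0 \notin \spec{\bar{L_0}}$ for injectivity. The only cosmetic differences are that the paper verifies the boundary conditions at infinity via the crude polynomial bound $|f(s)| \le c\e^{s}$ rather than Cauchy--Schwarz, and omits your Sturm--Liouville motivation and the $\sum n^{-2}$ sanity check.
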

\begin{proof}
Let $R_0$ be defined by equation \eqnref{eq:r0def}. Then $R_0$ is Hilbert-Schmidt since
\begin{eqnarray*}
	\lefteqn{\hspace*{-2em}\int_0^{\infty} \left(\int_0^{\infty} \mod{G_0(s,t)}^2 w_0(t) \mathrm{d}t\right) w_0(s) \mathrm{d}s}\\
	&=& 2\int_0^{\infty} \left(\int_s^{\infty} \mod{G_0(s,t)}^2 w_0(t) \mathrm{d}t\right) w_0(s) \mathrm{d}s\\
	&=& 2\int_0^{\infty} \left(\int_s^{\infty} \mod{\e^{2s}-1}^2t^{-1}\e^{-2t}\mathrm{d}t\right) s^{-1}\e^{-2s}\mathrm{d}s\\
	&\leq& 2\int_0^{\infty} \left(\int_s^{\infty} \e^{-2t}\mathrm{d}t\right)\mod{\e^{2s}-1}^2 s^{-2}\e^{-2s}\mathrm{d}s\\
	&=&\int_0^{\infty} \mod{\e^{-2s}-1}^2 s^{-2}\mathrm{d}s\\
	&\leq& \int_0^1 4 \mathrm{d}s + \int_1^{\infty} s^{-2} \mathrm{d}s\\
	&=& 5.
\end{eqnarray*}

Let $f \in s\mathcal{P}$. Then $f \in L^1((0,\infty),w_0(s)\mathrm{d}s)$. Hence it is easy to show that $Rf$ is absolutely continuous and
\begin{eqnarray*}
	(Rf)'(s) &=& \int_s^\infty \gamma'(s)f(t)w_0(t)\mathrm{d}t\\
	&=& p_0(s)^{-1} \int_s^\infty f(t)w_0(t)\mathrm{d}t.
\end{eqnarray*}
We now show that $Rf \in \dom{L_0}$. Since $f$, $w_0$ and $p_0^{-1}$ are smooth on $(0,\infty)$, $(Rf)$ is twice differentiable (indeed, it is also smooth). We calculate
\begin{equation}
	\limsup_{s\to 0+}\mod{(Rf)(s)} \leq \limsup_{s\to 0+}\gamma_0(s)\int_0^\infty \mod{f(t)}w_0(t)\mathrm{d}t = 0
\end{equation}
and
\begin{eqnarray*}
	\limsup_{s\to 0+}\mod{(Rf)'(s)} &\leq& \limsup_{s\to 0+}p_0(s)^{-1}\int_s^\infty \mod{f(t)}w_0(t)\mathrm{d}t\\
	&=& \int_0^\infty \mod{f(t)}w_0(t)\mathrm{d}t < \infty.
\end{eqnarray*}
Since $f$ is a polynomial, there exists a constant $c>0$ such that $\mod{f(s)}\leq c\e^s$ for all $s \in (0,\infty)$. Let $\delta>0$ be given. Then, for all $s>c/\delta$,
\begin{eqnarray*}
	\mod{\e^{-s}(Rf)(s)} &\leq& \e^{-s}\int_0^s\mod{\gamma_0(t)f(t)w_0(t)}\mathrm{d}t + \e^{-s}\int_s^\infty \mod{\gamma_0(s)f(t)w_0(t)}\mathrm{d}t\\
	&\leq& \e^{-s}\int_0^s\e^{2t}\mod{f(t)}t^{-1}\e^{-2t}\mathrm{d}t + \e^{-s}\int_s^\infty \e^{2s}\mod{f(t)}t^{-1}\e^{-2t}\mathrm{d}t\\
	&\leq& \e^{-s}\int_0^{c/\delta} \mod{f(t)}t^{-1}\mathrm{d}t +\e^{-s}\int_{c/\delta}^s \delta\e^t\mathrm{d}t + s^{-1}\e^s\int_s^\infty c\e^{-t}\mathrm{d}t\\
	&\leq& \e^{-s}\int_0^{c/\delta} \mod{f(t)}t^{-1}\mathrm{d}t + \delta +cs^{-1} \to \delta
\end{eqnarray*}
as $s \to \infty$. Since $\delta>0$ is arbitrary, $\e^{-s}(Rf)(s) \to 0$ as $s \to \infty$. Also
\begin{eqnarray*}
	\mod{\e^{-s}(Rf)'(s)} &\leq& \e^s\int_s^\infty\mod{f(t)}\frac{2}{t}\e^{-2t}\mathrm{d}t\\
	&\leq& 2s^{-1}\e^s\int_s^\infty c\e^{-t}\mathrm{d}t\\
	&=& 2cs^{-1} \to 0
\end{eqnarray*}
as $s \to \infty$. Therefore $Rf \in \dom{L_0}$, provided $L_0Rf \in \mathcal{H}$, and
\begin{eqnarray*}
	(\bar{L_0}Rf)(s) &=& (L_0Rf)(s)\\
	&=& -w_0(s)^{-1}\dd{s}\int_s^\infty f(t)w_0(t)\mathrm{d}t\\
	&=& f(s).
\end{eqnarray*}
Since $f \in \mathcal{H}$, we indeed have $L_0Rf \in \mathcal{H}$.

We have proven that $\bar{L_0}Rf = f$ for all $f \in s\mathcal{P}$. Let $f \in \mathcal{H}$. By Corollary \ref{cor:dense}, there is a sequence $(f_n)$ in $s\mathcal{P}$ such that $f_n \to f$ as $n \to \infty$. By the above, we have $Rf_n \to Rf$ and $\bar{L_0}Rf_n = f_n \to f$ as $n \to \infty$. Since $\bar{L_0}$ is closed, this implies that $Rf \in \dom{\bar{L_0}}$ and $\bar{L_0}Rf = f$. Finally, $\bar{L_0}$ is injective since $0 \notin \spec{\bar{L_0}}$. Hence $\bar{L_0}R\bar{L_0}f = \bar{L_0}f$ implies $R\bar{L_0}f = f$ for all $f \in \mathcal{H}$.
\qed
\end{proof}

\begin{theorem}\label{thm:l0m0}
The operator $\bar{L_0}$ is unitarily equivalent to an operator $M_0$ on $L^2((0,\infty),\mathrm{d}s)$. Moreover,
\begin{equation}\label{eq:k0ker}
	(M_0^{-1}f)(s) = \int_0^{\infty} K_0(s,t)f(t)\mathrm{d}t
\end{equation}
for all $f \in L^2((0,\infty),\mathrm{d}s)$, where
\begin{eqnarray*}
	K_0(s,t)	& = & w_0(s)^{1/2}G_0(s,t)w_0(t)^{1/2}\\
						& = & \left\{	\begin{array}{cc}	(st)^{-1/2} \e^{-s}\left\{\e^{2s}-1\right\}\e^{-t}	&	\textrm{if } 0 < s \leq t\\
																						 (st)^{-1/2} \e^{-s}\left\{\e^{2t}-1\right\}\e^{-t}	& \textrm{if } 0 < t \leq s.
													\end{array}\right.
\end{eqnarray*}
\end{theorem}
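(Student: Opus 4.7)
The plan is to mimic exactly the construction used in Theorem \ref{thm:unitary}, where $\tilde{L}_\varepsilon$ on $L^2((0,1/\varepsilon),\tilde{w}_\varepsilon\,\mathrm{d}s)$ was conjugated into $M_\varepsilon$ on $L^2((0,1/\varepsilon),\mathrm{d}s)$ by the multiplication unitary $J_\varepsilon$. The analogous map here is $J_0:L^2((0,\infty),\mathrm{d}s)\to\mathcal{H}$ defined by $(J_0f)(s)=w_0(s)^{-1/2}f(s)$. Since $w_0(s)=2s^{-1}\mathrm{e}^{-2s}$ is strictly positive on $(0,\infty)$, a one-line change of variables shows that $J_0$ is an isometric isomorphism, hence unitary. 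Defining $M_0=J_0^{-1}\bar{L_0}J_0$ immediately gives the unitary equivalence claim, and $M_0$ inherits from $\bar{L_0}$ (via the previous theorem) invertibility with Hilbert--Schmidt inverse $M_0^{-1}=J_0^{-1}R_0J_0$.

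The second step is to compute the integral kernel of $M_0^{-1}$. For $f\in L^2((0,\infty),\mathrm{d}s)$ one writes
\[
(M_0^{-1}f)(s)=w_0(s)^{1/2}(R_0J_0f)(s)=\int_0^\infty w_0(s)^{1/2}G_0(s,t)w_0(t)^{-1/2}f(t)\,w_0(t)\,\mathrm{d}t,
\]
so the kernel is $K_0(s,t)=w_0(s)^{1/2}G_0(s,t)w_0(t)^{1/2}$. Substituting $w_0(s)^{1/2}=\sqrt{2}\,s^{-1/2}\mathrm{e}^{-s}$ and $\gamma_0(\min(s,t))=\frac{1}{2}(\mathrm{e}^{2\min(s,t)}-1)$ into the product cancels the factor of $2$ and yields the two-case formula in the statement. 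This is a purely mechanical algebraic substitution.

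There is no real obstacle in this theorem; it is a bookkeeping lemma that places $\bar{L_0}$ on the common ambient space $L^2((0,\infty),\mathrm{d}s)$ so that $M_0^{-1}$ can subsequently be compared with $N_\varepsilon$ as $\varepsilon\to 0$. The only point that merits a brief verification is that $J_0$ really is unitary onto $\mathcal{H}$, which follows from
\[
\int_0^\infty|(J_0f)(s)|^2w_0(s)\,\mathrm{d}s=\int_0^\infty|f(s)|^2\,\mathrm{d}s;
\]
everything else is a transcription of the $\varepsilon>0$ argument with the replacements $\tilde{w}_\varepsilon\leadsto w_0$, $\tilde{p}_\varepsilon\leadsto p_0$, $\gamma_\varepsilon\leadsto\gamma_0$, and $1/\varepsilon\leadsto\infty$.
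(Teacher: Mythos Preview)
Your proposal is correct and follows exactly the paper's approach: define the multiplication unitary $J_0:L^2((0,\infty),\mathrm{d}s)\to\mathcal{H}$ by $(J_0f)(s)=w_0(s)^{-1/2}f(s)$, set $M_0=J_0^{-1}\bar{L_0}J_0$, and read off the kernel of $M_0^{-1}=J_0^{-1}R_0J_0$. You have in fact written out more detail than the paper, which simply states that \eqref{eq:k0ker} is immediate after defining $J_0$ and $M_0$.
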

\begin{proof}
We define a unitary operator 
\[
J_0:L^2((0,\infty),\mathrm{d}s) \to L^2((0,\infty),w_0(s)\mathrm{d}s)
\]
by
\begin{equation}
	(J_0f)(s) = w_0(s)^{-1/2}f(s)
\end{equation}
and put $M_0 = J_0^{-1}\bar{L_0}J_0$. Then $M_0^{-1} = J_0^{-1}R_0J_0$ and \eqnref{eq:k0ker} is immediate.
\qed
\end{proof}

\section{Convergence of the eigenvalues}

Clearly $\tilde{K_{\varepsilon}}(s,t) \to K_0(s,t)$ as $\varepsilon \to 0$ for each $s$, $t \in (0,\infty)$. In this section we shall show that $K_{\varepsilon} \to K_0$ in $L^2$-norm as $\varepsilon \to 0$, and hence that $N_{\varepsilon} \to M_0^{-1}$ in Hilbert-Schmidt norm as $\varepsilon \to 0$. We use this fact to prove that the eigenvalues of $L_\varepsilon$ converge to those of $L_0$ as $\varepsilon \to 0$.

\begin{lemma}\label{lem:bound}
If $0 \leq s \leq t < 1/\varepsilon$ then
\begin{equation}
	\mbox{$\left(\frac{1-\varepsilon s}{1+\varepsilon s}\right)^{1/2\varepsilon}\left\{\left(\frac{1+\varepsilon s}{1-\varepsilon s}\right)^{1/\varepsilon} - 1 \right\}\left(\frac{1-\varepsilon t}{1+\varepsilon t}\right)^{1/2\varepsilon} - \e^{-s}\left\{\e^{2s}-1\right\}\e^{-t}$} \leq \e^{-s-t}.
\end{equation}
\end{lemma}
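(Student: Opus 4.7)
The plan is to introduce the auxiliary quantity
\[
b_\varepsilon(s) = \left(\frac{1+\varepsilon s}{1-\varepsilon s}\right)^{1/(2\varepsilon)},
\]
in terms of which the expression on the left-hand side of the inequality can be rewritten as
\[
\frac{b_\varepsilon(s)}{b_\varepsilon(t)} - \frac{1}{b_\varepsilon(s)\, b_\varepsilon(t)} - \e^{s-t} + \e^{-s-t}.
\]
Cancelling the $\e^{-s-t}$ term against the right-hand side of the asserted bound and discarding the non-positive contribution $-1/(b_\varepsilon(s) b_\varepsilon(t)) \leq 0$ from the left, the lemma reduces to the much simpler statement
\[
\frac{b_\varepsilon(s)}{b_\varepsilon(t)} \leq \e^{s-t} \qquad \text{for } 0 \leq s \leq t < 1/\varepsilon.
\]

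To prove this I would take logarithms and use the power series
\[
\log b_\varepsilon(s) = \frac{1}{\varepsilon}\,\mathrm{arctanh}(\varepsilon s) = \sum_{k=0}^{\infty} \frac{\varepsilon^{2k}\, s^{2k+1}}{2k+1},
\]
which is valid for $0 \leq s < 1/\varepsilon$ and whose coefficients are all non-negative, the $k=0$ term being exactly $s$. Consequently
\[
\log b_\varepsilon(t) - \log b_\varepsilon(s) = \sum_{k=0}^{\infty} \frac{\varepsilon^{2k}}{2k+1}\bigl(t^{2k+1} - s^{2k+1}\bigr) \geq t - s
\]
for $s \leq t$, since every summand is non-negative and the $k=0$ term alone already contributes $t - s$. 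Exponentiating gives the desired inequality.

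There is essentially no serious obstacle here; the lemma is purely algebraic once the right regrouping is spotted. The only \emph{conceptual} ingredient is the observation that, because $\mathrm{arctanh}$ has a Taylor series with positive coefficients, $b_\varepsilon$ grows pointwise no slower than $\e^s$ and, more importantly, its logarithmic derivative $1/(1-\varepsilon^2 s^2)$ is bounded below by $1$ on $[0,1/\varepsilon)$, so the multiplicative ratio $b_\varepsilon(s)/b_\varepsilon(t)$ is dominated by $\e^{s-t}$ uniformly in $\varepsilon$. This is precisely what makes the correction term in the lemma no worse than $\e^{-s-t}$.
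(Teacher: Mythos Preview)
Your proof is correct and is essentially the paper's own argument: the paper also reduces the claim to the inequality $\left(\frac{1+\varepsilon s}{1-\varepsilon s}\right)^{1/2\varepsilon}\left(\frac{1-\varepsilon t}{1+\varepsilon t}\right)^{1/2\varepsilon}\leq \e^{s-t}$ (your $b_\varepsilon(s)/b_\varepsilon(t)\leq \e^{s-t}$) via the same discarding of the ``$-1$'' term, and proves that inequality by the identical power-series expansion of $\tfrac{1}{2\varepsilon}\bigl(\log(1+\varepsilon s)-\log(1-\varepsilon s)\bigr)$. The only difference is cosmetic: you introduce the notation $b_\varepsilon$ and rearrange before discarding, whereas the paper discards first and then rewrites $\e^{s-t}=\e^{-s}(\e^{2s}-1)\e^{-t}+\e^{-s-t}$.
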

\begin{proof}
We first note that
\begin{eqnarray*}
	\mbox{$\log\left(\left(\frac{1+\varepsilon s}{1-\varepsilon s}\right)^{1/2\varepsilon}\left(\frac{1-\varepsilon t}{1+\varepsilon t}\right)^{1/2\varepsilon}\right)$}	& = & \frac{1}{2\varepsilon}\left\{\log(1+\varepsilon s) -\log(1-\varepsilon s)\right\}\\
																	&   &  + \frac{1}{2\varepsilon} \left\{\log(1-\varepsilon t) - \log(1+ \varepsilon t)\right\}\\
																	& = & s - t + \sum_{k=1}^{\infty} \frac{\varepsilon^{2k}}{2k+1}(s^{2k+1}-t^{2k+1})\\
																	& \leq & s-t
\end{eqnarray*}
and hence
\begin{equation}
	\left(\frac{1+\varepsilon s}{1-\varepsilon s}\right)^{1/2\varepsilon}\left(\frac{1-\varepsilon t}{1+\varepsilon t}\right)^{1/2\varepsilon} \leq \e^{s-t}
\end{equation}
for $0 \leq s \leq t < 1/\varepsilon$. For such $s$, $t$,
\begin{eqnarray*}
	\mbox{$\left(\frac{1-\varepsilon s}{1+\varepsilon s}\right)^{1/2\varepsilon}\left\{\left(\frac{1+\varepsilon s}{1-\varepsilon s}\right)^{1/\varepsilon} - 1 \right\}\left(\frac{1-\varepsilon t}{1+\varepsilon t}\right)^{1/2\varepsilon}$}
 & \leq & \mbox{$\left(\frac{1+\varepsilon s}{1-\varepsilon s}\right)^{1/2\varepsilon}\left(\frac{1-\varepsilon t}{1+\varepsilon t}\right)^{1/2\varepsilon}$}\\
	& \leq & \e^{s-t}\\
	& = & \e^{-s}\left\{\e^{2s}-1\right\}\e^{-t} + \e^{-s-t}.
\end{eqnarray*}
\qed
\end{proof}

\begin{theorem}\label{thm:hsc}
$\lim_{\varepsilon \to 0}\norm{N_{\varepsilon} - M_0^{-1}}_{\mathrm{HS}}=0$.
\end{theorem}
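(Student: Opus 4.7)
Writing the Hilbert--Schmidt norm as the $L^2$ norm of the integral kernel gives $\norm{N_\varepsilon - M_0^{-1}}_{\mathrm{HS}}^2 = \int_0^\infty \int_0^\infty \mod{\tilde{K_\varepsilon}(s,t) - K_0(s,t)}^2\,\mathrm{d}s\,\mathrm{d}t$, and the strategy is a three-way decomposition of the domain of integration combined with dominated convergence at each step. First, write $(0,\infty)^2 = D_\varepsilon \cup D_\varepsilon^c$ with $D_\varepsilon = (0,1/\varepsilon)^2$. On $D_\varepsilon^c$ the function $\tilde{K_\varepsilon}$ vanishes by construction, so the contribution is $\int_{D_\varepsilon^c}\mod{K_0}^2$; a calculation analogous to the one yielding $\norm{R_0}_{\mathrm{HS}}^2 \leq 5$ in the preceding theorem shows $K_0 \in L^2((0,\infty)^2)$, and since $\chi_{D_\varepsilon^c} \to 0$ pointwise a.e., dominated convergence gives this contribution $\to 0$.

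Second, fix a small $\delta > 0$ and decompose $D_\varepsilon$ further into $\{\min(s,t) > \delta\}$ and $\{\min(s,t) \leq \delta\}$. On the first piece, Lemma \ref{lem:bound} together with its symmetric analogue for $t \leq s$ yields the one-sided bound $\tilde{K_\varepsilon} - K_0 \leq (st)^{-1/2}\e^{-s-t}$, while the non-negativity of $\tilde{K_\varepsilon}$ (immediate from $\alpha_\varepsilon(s) \geq 1$) gives $K_0 - \tilde{K_\varepsilon} \leq K_0$; combining these, $\mod{\tilde{K_\varepsilon} - K_0}^2 \leq 2\mod{K_0}^2 + 2(st)^{-1}\e^{-2(s+t)} \leq 2\mod{K_0}^2 + 2\delta^{-2}\e^{-2(s+t)}$, which is an $L^1$ majorant independent of $\varepsilon$. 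Dominated convergence using the pointwise limit $\tilde{K_\varepsilon}(s,t) \to K_0(s,t)$ yields this contribution $\to 0$ as $\varepsilon \to 0$.

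Third, on $\{\min(s,t) \leq \delta\}$ I would restrict by symmetry to $s \leq \delta$ with $s \leq t$ and estimate each kernel directly, using $\mod{\tilde{K_\varepsilon} - K_0}^2 \leq 2\mod{\tilde{K_\varepsilon}}^2 + 2\mod{K_0}^2$. For $\varepsilon \leq 1/(2\delta)$, the series $(1/(2\varepsilon))\log((1+\varepsilon s)/(1-\varepsilon s)) = s + \sum_{k \geq 1}\varepsilon^{2k}s^{2k+1}/(2k+1) \leq 2s$ (valid because $\varepsilon s \leq 1/2$) gives $((1+\varepsilon s)/(1-\varepsilon s))^{1/\varepsilon} - 1 \leq \e^{4s} - 1 \leq 4s\e^{4s}$, and combined with $((1-\varepsilon t)/(1+\varepsilon t))^{1/(2\varepsilon)} \leq \e^{-t}$ this yields $\mod{\tilde{K_\varepsilon}(s,t)}^2 \leq 16(s/t)\e^{8s - 2t}$; analogously, $\e^s - \e^{-s} \leq 2s\e^s$ gives $\mod{K_0(s,t)}^2 \leq 4(s/t)\e^{2(s-t)}$. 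Using $\int_s^\infty t^{-1}\e^{-2t}\,\mathrm{d}t \leq s^{-1}\e^{-2s}/2$ and integrating over $\{s \leq \delta,\, s \leq t\}$ then bounds this contribution by $C\delta$ uniformly in sufficiently small $\varepsilon$. Combining the three estimates, $\limsup_{\varepsilon \to 0}\norm{N_\varepsilon - M_0^{-1}}_{\mathrm{HS}}^2 \leq C\delta$, and letting $\delta \to 0$ completes the proof. The main obstacle is the non-integrability of Lemma \ref{lem:bound}'s majorant $(st)^{-1/2}\e^{-s-t}$ near the origin, which rules out a single application of dominated convergence; the fix is the separate near-origin treatment above, exploiting the leading-order behaviour $((1+\varepsilon s)/(1-\varepsilon s))^{1/\varepsilon} - 1 = 2s + O(s^2)$ to recover enough integrability at the corner $(0,0)$.
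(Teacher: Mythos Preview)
Your argument is correct and follows the same overall plan as the paper—split the domain to isolate the singularity of the majorant from Lemma~\ref{lem:bound} at the origin, and apply dominated convergence elsewhere—but the splittings and the near-origin estimates differ.

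The paper uses a single fixed cut at $s=\tfrac12\log 2$ (after reducing by symmetry to $s\le t$). For $s\ge\tfrac12\log 2$ it uses Lemma~\ref{lem:bound} directly, noting that there $\e^{2s}-1\ge 1$ so the $\e^{-s-t}$ term is absorbed into the $K_0$-type majorant. For $0\le s\le\tfrac12\log 2$ it invokes the sharper series comparison $((1+\varepsilon s)/(1-\varepsilon s))^{1/\varepsilon}\le(1+s)/(1-s)$ to produce an $\varepsilon$-independent $L^2$ majorant and applies dominated convergence on that piece too. No auxiliary parameter is needed.

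You instead introduce an auxiliary $\delta$: on $\{\min(s,t)>\delta\}$ you combine the one-sided Lemma~\ref{lem:bound} bound with $\tilde K_\varepsilon\ge 0$ and the crude cutoff $(st)^{-1}\le\delta^{-2}$ to get an integrable majorant; on $\{\min(s,t)\le\delta\}$ you abandon dominated convergence and estimate $|\tilde K_\varepsilon|^2$ and $|K_0|^2$ directly via the weaker bound $((1+\varepsilon s)/(1-\varepsilon s))^{1/\varepsilon}\le\e^{4s}$, arriving at $O(\delta)$ uniformly in small $\varepsilon$, and finally send $\delta\to 0$. This trades the paper's sharper pointwise estimate for a more elementary one at the cost of the extra limiting step. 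Two small remarks: your symbol $\alpha_\varepsilon(s)$ is not defined in the paper (you presumably mean $((1+\varepsilon s)/(1-\varepsilon s))^{1/\varepsilon}$), and when applying dominated convergence on the $\varepsilon$-dependent region $D_\varepsilon\cap\{\min>\delta\}$ you should note, as the paper does, that the majorant extends trivially to the fixed region $\{\min>\delta\}$ because $\tilde K_\varepsilon$ vanishes outside $D_\varepsilon$.
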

\begin{proof}
Using the symmetry of $\tilde{K_{\varepsilon}}$ and $K_0$, it is sufficient to show that
\begin{equation}
\int_0^{\infty}\int_s^{\infty} \mod{\tilde{K_{\varepsilon}}-K_0}^2 \mathrm{d}t\mathrm{d}s \to 0
\end{equation}
as $\varepsilon \to 0$.
If $\frac{1}{2}\log2 \leq s \leq t < 1/\varepsilon$ then, by Lemma \ref{lem:bound},
\begin{eqnarray*}
	st\mod{\tilde{K_{\varepsilon}}-K_0}^2
	& = &  \mbox{$\mod{\left(\frac{1-\varepsilon s}{1+\varepsilon s}\right)^{1/2\varepsilon}\left\{\left(\frac{1+\varepsilon s}{1-\varepsilon s}\right)^{1/\varepsilon} - 1 \right\}\left(\frac{1-\varepsilon t}{1+\varepsilon t}\right)^{1/2\varepsilon} - \e^{-s}\left\{\e^{2s}-1\right\}\e^{-t}}^2$}\\
																			& \leq &  \max\left\{\e^{-2s-2t},\e^{-2s}\left\{\e^{2s}-1\right\}^2\e^{-2t}\right\}\\
																			& \leq & \e^{-2s}\left\{\e^{2s}-1\right\}^2\e^{-2t}.
\end{eqnarray*}
If $t \geq 1/\varepsilon$ then $\tilde{K_{\varepsilon}} = 0$, so the above bound still holds.
Since
\begin{eqnarray*}
	\lefteqn{\hspace*{-3em}\int_{\frac{1}{2}\log2}^{\infty}\int_s^{\infty} (st)^{-1}\e^{-2s}\left\{\e^{2s}-1\right\}^2\e^{-2t} \mathrm{d}t\mathrm{d}s}\\
 & \leq & \int_{\frac{1}{2}\log2}^{\infty} s^{-2}\e^{-2s}\left\{\e^{2s}-1\right\}^2 \int_s^{\infty} \e^{-2t} \mathrm{d}t\mathrm{d}s\\
	& = & \frac{1}{2} \int_{\frac{1}{2}\log2}^{\infty} s^{-2}\e^{-4s}\left\{\e^{2s}-1\right\}^2\mathrm{d}s\\
	& \leq & \frac{1}{2}\int_{\frac{1}{2}\log2}^{\infty}s^{-2}\mathrm{d}s\\
	& < & \infty,
\end{eqnarray*}
we may use the Lebesgue Dominated Convergence Theorem to prove that
\begin{equation}
\int_{\frac{1}{2}\log2}^{\infty}\int_s^{\infty} \mod{\tilde{K_{\varepsilon}}-K_0}^2 \mathrm{d}t\mathrm{d}s \to 0
\end{equation}
as $\varepsilon \to 0$.

If $0\leq s < 1$ and $0<\varepsilon<1$ then
\begin{eqnarray*}
\log\left\{\left(\frac{1+\varepsilon s}{1-\varepsilon s}\right)^{1/\varepsilon}\right\}
& = & 2\sum_{k=0}^{\infty} \frac{\varepsilon^{2k}s^{2k+1}}{2k+1}\\
& \leq & 2\sum_{k=0}^{\infty} \frac{s^{2k+1}}{2k+1}\\
& = & \log\left(\frac{1+s}{1-s}\right),
\end{eqnarray*}
so
\begin{equation}
	\left(\frac{1+\varepsilon s}{1-\varepsilon s}\right)^{1/\varepsilon} \leq \log\left(\frac{1+s}{1-s}\right).
\end{equation}
Also
\begin{equation}
	\left(\frac{1-\varepsilon x}{1+\varepsilon x}\right)^{1/2\varepsilon} \leq \e^{-x}
\end{equation}
if $0 \leq x < 1/\varepsilon$.
Hence, for $0 \leq s \leq \frac{1}{2}\log2$, $s \leq t < 1/\varepsilon$,
\begin{eqnarray*}
	0&\leq &st\mod{\tilde{K_{\varepsilon}}-K_0}^2  \\
 &\leq& \mbox{$\max \left\{\left(\frac{1-\varepsilon s}{1+\varepsilon s}\right)^{1/2\varepsilon}\left\{\left(\frac{1+\varepsilon s}{1-\varepsilon s}\right)^{1/\varepsilon} - 1 \right\}\left(\frac{1-\varepsilon t}{1+\varepsilon t}\right)^{1/2\varepsilon}\!\!, \e^{-s}\left\{\e^{2s}-1\right\}\e^{-t} \right\}^2$}\\
	& \leq & \mbox{$\max \left\{\e^{-2s}\left(\frac{1+s}{1-s} -1\right)^2\e^{-2t}, \e^{-2s}\left\{\e^{2s}-1\right\}^2\e^{-2t} \right\}$}.
\end{eqnarray*}
As before, this bound holds for all $t$ such that $t \geq s$, since $\tilde{K_{\varepsilon}} = 0$ if $t \geq 1/\varepsilon$.
Since we have
\begin{eqnarray*}
	\lefteqn{\hspace*{-3em}\int_0^{\frac{1}{2}\log2} \int_s^{\infty} (st)^{-1} \e^{-2s}\left\{\e^{2s}-1\right\}^2\e^{-2t} \mathrm{d}t\mathrm{d}s}\\
	& \leq & \int_0^{\frac{1}{2}\log2} s^{-2}\e^{-2s}\left\{\e^{2s}-1\right\}^2 \int_s^{\infty} \e^{-2t} \mathrm{d}t\mathrm{d}s\\
	& = & \frac{1}{2} \int_0^{\frac{1}{2}\log2} s^{-2}\e^{-4s}\left\{\e^{2s}-1\right\}^2\mathrm{d}s\\
	& < & \infty
\end{eqnarray*}
and
\begin{eqnarray*}
	\lefteqn{\hspace*{-3em}\int_0^{\frac{1}{2}\log2} \int_s^{\infty} (st)^{-1} \e^{-2s}\left(\frac{1+s}{1-s} -1\right)^2\e^{-2t} \mathrm{d}t\mathrm{d}s}\\
	& = & \int_0^{\frac{1}{2}\log2} \int_s^{\infty} (st)^{-1} \e^{-2s}\left(\frac{2s}{1-s}\right)^2\e^{-2t} \mathrm{d}t\mathrm{d}s\\
	& \leq & c\int_0^{\frac{1}{2}\log2} \int_s^{\infty} \e^{-2s}\e^{-2t} \mathrm{d}t\mathrm{d}s\\
	& = & \frac{c}{2} \int_0^{\frac{1}{2}\log2} \e^{-4s}\mathrm{d}s\\
	& < & \infty
\end{eqnarray*}
for some constant $c$, we may use the Lebesgue Dominated Convergence Theorem to prove that
\begin{equation}
\int_0^{\frac{1}{2}\log2}\int_s^{\infty} \mod{\tilde{K_{\varepsilon}}-K_0}^2 \mathrm{d}t\mathrm{d}s \to 0
\end{equation}
as $\varepsilon \to 0$.
\qed
\end{proof}

We now use standard variational methods to deduce the convergence of the eigenvalues from the norm convergence of the resolvents.

\begin{theorem}\label{thm:evc}
For each $\varepsilon \in (0,1)$, $n \in \setN$, let $\lambda_{\varepsilon,n}$ be the $nth$ eigenvalue of $L_\varepsilon$. Then $\lambda_{\varepsilon,n} \to n$ as $\varepsilon \to 0$.
\end{theorem}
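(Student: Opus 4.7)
The plan is to leverage the Hilbert--Schmidt convergence $N_\varepsilon \to M_0^{-1}$ established in Theorem \ref{thm:hsc}, combined with the classical min-max principle for self-adjoint compact operators. Since $L_\varepsilon$ is positive self-adjoint (by the construction in \cite{weir-2008}), the numbers $\lambda_{\varepsilon,n}$ form a positive sequence tending to $+\infty$ when arranged in increasing order. The unitary equivalence of Theorem \ref{thm:unitary} then identifies the non-zero eigenvalues of the compact operator $N_\varepsilon$ on $L^2((0,\infty),\mathrm{d}s)$ as precisely $\{\lambda_{\varepsilon,n}^{-1}\}_{n\in\setN}$, counted with multiplicity. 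Likewise, Corollary \ref{cor:sa} combined with Theorem \ref{thm:l0m0} identifies $M_0^{-1}$ as a self-adjoint compact operator with eigenvalues exactly $\{1/n : n \in \setN\}$.

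First I would record that both $N_\varepsilon$ and $M_0^{-1}$ are self-adjoint: this is immediate from the manifest symmetry of the kernels $\tilde{K_\varepsilon}(s,t)$ and $K_0(s,t)$ under interchange of $s$ and $t$. Both are also positive, since they arise (up to direct summing with the zero operator) as inverses of positive self-adjoint operators. I would then invoke the standard variational characterisation: for a positive self-adjoint compact operator $T$ on a Hilbert space $\mathcal{H}$, with eigenvalues arranged in decreasing order and repeated according to multiplicity as $\mu_1(T) \geq \mu_2(T) \geq \cdots \geq 0$, one has
\[
\mu_n(T) = \max\left\{ \min_{u \in V,\, \norm{u}=1} \ip{Tu}{u} : V \subset \mathcal{H},\; \dim V = n \right\}.
\]
A direct consequence is the Lipschitz bound $\mod{\mu_n(T_1) - \mu_n(T_2)} \leq \norm{T_1 - T_2}$ for any two such operators.

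Applying this with $T_1 = N_\varepsilon$ and $T_2 = M_0^{-1}$, and using that the Hilbert--Schmidt norm dominates the operator norm, Theorem \ref{thm:hsc} yields $\mu_n(N_\varepsilon) \to \mu_n(M_0^{-1}) = 1/n$ for every $n \in \setN$. Since $\mu_n(N_\varepsilon) = \lambda_{\varepsilon,n}^{-1}$, taking reciprocals gives $\lambda_{\varepsilon,n} \to n$, as claimed.

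The only real subtlety, and the step requiring the most care, is bookkeeping: one must verify that the $n$-th smallest positive eigenvalue $\lambda_{\varepsilon,n}$ of $L_\varepsilon$ corresponds under inversion to the $n$-th largest positive eigenvalue of $M_\varepsilon^{-1}$, and that the extra zero eigenspace carried by $N_\varepsilon$ on $L^2((1/\varepsilon,\infty),\mathrm{d}s)$ has no effect on the positive part of the spectrum. Once these indexing issues are settled, the remainder is an off-the-shelf application of variational perturbation theory for compact self-adjoint operators.
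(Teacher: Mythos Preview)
Your proposal is correct and follows essentially the same approach as the paper: both arguments deduce convergence of the eigenvalues from the Hilbert--Schmidt convergence of Theorem~\ref{thm:hsc} via the variational min--max principle for self-adjoint operators. The only cosmetic difference is that the paper applies the inf--sup characterisation to $I-N_\varepsilon$ and $I-M_0^{-1}$ (so as to quote Theorem~4.5.2 of \cite{stdo} directly), whereas you apply the max--min characterisation to $N_\varepsilon$ and $M_0^{-1}$ themselves and invoke the Lipschitz bound $|\mu_n(T_1)-\mu_n(T_2)|\le\|T_1-T_2\|$; the bookkeeping about the extra zero eigenspace that you flag is precisely what the paper's shift by $I$ sidesteps.
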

\begin{proof}
If $S$ is a finite-dimensional subspace of $L^2((0,\infty),\mathrm{d}s)$ then we define
\begin{eqnarray}
	\mu_\varepsilon(S) & = & \sup\left\{\ip{(I-N_{\varepsilon})f}{f}:f \in S \textrm{ and } \norm{f}=1\right\},\\
	\mu_0(S) & = & \sup\left\{\ip{(I-S_0^{-1})f}{f}:f \in S \textrm{ and } \norm{f}=1\right\},\\
	\mu_{\varepsilon,n} & = & \inf\{\mu_\varepsilon(S):S \subset L^2((0,\infty),\mathrm{d}s) \textrm{ and }\dim(S)=n\},\\
	\mu_{0,n} & = & \inf\{\mu_0(S):S \subset L^2((0,\infty),\mathrm{d}s) \textrm{ and }\dim(S)=n\}.
\end{eqnarray}
For each $\varepsilon \in (0,1)$, $N_{\varepsilon}$ is self-adjoint since it is a Hilbert-Schmidt operator with a symmetric integral kernel. Also $\spec{L_\varepsilon} \subset [1,\infty)$ and hence it follows from Theorem \ref{thm:unitary} that $I-N_{\varepsilon}$ is a non-negative self-adjoint operator with essential spectrum $\{1\}$ and eigenvalues $\{1-1/\lambda_{\varepsilon,n}:n \in \setN\} \subset [0,1)$. It follows from Corollary \ref{cor:sa} and Theorem \ref{thm:l0m0} that $I-M_0^{-1}$ is a non-negative self-adjoint operator with essential spectrum $\{1\}$ and eigenvalues $\{1-1/n:n\in\setN\} \subset [0,1)$.

By Theorem 4.5.2 of \cite{stdo}, $\mu_{\varepsilon,n} = 1-1/\lambda_{\varepsilon,n}$ and $ \mu_{0,n} = 1-1/n$. Let $\delta > 0$ be given. Then Theorem \ref{thm:hsc} implies that there exists $\eta>0$ such that, whenever $0<\varepsilon<\eta$,
\begin{equation}
	\ip{(I-M_0^{-1})f}{f} - \delta \leq \ip{(I-N_{\varepsilon})f}{f} \leq \ip{(I-M_0^{-1})f}{f} +\delta
\end{equation}
for all $f \in L^2((0,\infty),\mathrm{d}s)$ such that $\norm{f} = 1$. For such $\varepsilon$, this implies that
\begin{equation}
	\mu_\varepsilon(S) - \delta \leq \mu_0(S) \leq \mu_\varepsilon(S) + \delta
\end{equation}
for all finite-dimensional subspaces $S$ of $L^2((0,\infty),\mathrm{d}s)$ and hence that
\begin{equation}
	\mu_{\varepsilon,n} - \delta \leq \mu_{0,n} \leq \mu_{\varepsilon,n} + \delta
\end{equation}
for all $n \in \setN$. Therefore, for all $n \in \setN$, $\mu_{\varepsilon,n} \to \mu_{0,n}$ as $\varepsilon \to 0$ and hence $\lambda_{\varepsilon,n} \to n$ as $\varepsilon \to 0$.
\qed
\end{proof}

%\bibliographystyle{unsrt}
%\bibliography{D:/John/LaTeX/References}

\end{document}